\renewcommand{\div}{\operatorname{div}}
\newcommand{\Tt}{{\mathbb{T}}}
\newcommand{\cqd}{\hfill $\blacksquare$}
\newcommand{\epsi}{\varepsilon}
\def\geq{\geqslant}
\begin{document}

\title*{Conservation laws arising in the study of forward-forward Mean-Field Games.}
% Use \titlerunning{Short Title} for an abbreviated version of
% your contribution title if the original one is too long
\author{Diogo Gomes, Levon Nurbekyan, and Marc Sedjro}
% Use \authorrunning{Short Title} for an abbreviated version of
% your contribution title if the original one is too long
\institute{Levon Nurbekyan \at CEMSE Division, King Abdullah University of Science and Technology (KAUST), Thuwal 23955-6900. Saudi Arabia. \\
	\email{levon.nurbekyan@kaust.edu.sa},\and Diogo Gomes \at CEMSE
	Division, King Abdullah University of Science and Technology (KAUST),  Thuwal 23955-6900. Saudi Arabia.\\ \email{diogo.gomes@kaust.edu.sa}
\and Marc Sedjro \at  CEMSE
Division, King Abdullah University of Science and Technology (KAUST),Thuwal 23955-6900. Saudi Arabia.\\
\email{marc.sedjro@kaust.edu.sa}}
%\keywords{Mean-field games; systems of conservation laws; convergence to equilibrium; Hamilton-Jacobi equations; transport equations; Fokker-Planck equations}

%
% Use the package "url.sty" to avoid
% problems with special characters
% used in your e-mail or web address
%
\maketitle

\abstract{We consider forward-forward Mean Field Game (MFG) models that  arise in numerical approximations of stationary MFGs. First, we establish a link between these models and a class of hyperbolic conservation laws as well as certain nonlinear wave equations. Second, we investigate existence and long-time behavior of solutions for such models.}

\section{Introduction}
\label{sec:1}

A few years ago,  Lasry and Lions \cite{ll3} and Caines, Huang and Malhame \cite{Caines1}  independently introduced the Mean-Field Game (MFG) framework. These games model competitive interactions in a population of agents with a dynamics given by an optimal control problem.  A typical                                                                                              MFG is determined by the system
\begin{eqnarray}\label{eq: General MFG}
\begin{cases}
-u_t + H(x, Du)= \epsi\Delta u+ g[m]\qquad\qquad \quad \mathbb{ T}^d\times [0,T] \\
m_t-\div(D_p H(x, Du(x) m))=\epsi \Delta m \qquad\quad \mathbb{T}^d\times [0,T]. 
\end{cases}
\end{eqnarray}
Here, $\mathbb{T}^d$ is the $d$-dimensional torus and the Hamiltonian, $H$, the coupling, $g$, and the terminal time, $T>0$, are prescribed. The first equation in (\ref{eq: General MFG}) is a Hamilton-Jacobi equation. This equation states the optimality of the value function,  $u$, associated with the control problem. The second equation is the Fokker-Planck equation that determines the evolution of the density of the agents, $m$.  $\epsi\geq 0$ is a viscosity parameter. If $\epsi=0$, we refer  to \eqref{eq: General MFG} as a first-order MFG. Otherwise, we refer to \eqref{eq: General MFG} as  a parabolic MFG.
 System  \eqref{eq: General MFG} is typically complemented with initial-terminal conditions:
\begin{equation}
\begin{cases}
u(x,T) = u_T(x) \\
m(x,0) = m_0 (x).
\end{cases}
\end{equation}
Extensive research has been conducted in the study of MFGs. For the parabolic problem, strong and weak solutions were, respectively, examined in \cite{GPim2, GPim1, ll2} and \cite{ ll2, porretta2}. The stationary problem for the parabolic case has also generated great interest - several results on the existence of classical and weak solutions were obtained in \cite{GMit,GP,GPat,GPatVrt}. The uniqueness of a solution in all these cases relies on the monotonicity of $g$.

Here, we consider a related problem, forward-forward MFG, that is derived from (\ref{eq: General MFG}) by reversing the time in the Hamilton-Jacobi equation. Accordingly, we consider the system 
\begin{equation}\label{eq:FF MFG}
\begin{cases}
u_t + H(x, Du)= \epsi\Delta u+ g[m] \\
m_t-\div(D_p H(x, Du(x)) m)=\epsi \Delta m.
\end{cases}
\end{equation}
Because of the time reversal, we prescribe initial-initial conditions:
\begin{eqnarray}\label{eq: ff initial data}
\begin{cases}
u(x,0) = u_0(x) \\
m(x,0) = m_0 (x)
\end{cases}
\end{eqnarray}
 for \eqref{eq:FF MFG}.
Forward-forward models were first introduced  in  \cite{DY} to numerically approximate solutions of stationary MFGs.  Before our contributions \cite{GomNurSed16}, no rigorous results on the long-time convergence of forward-forward MFGs had been proven.
%Nevertheless, the expectation is that, in the presence of a viscosity parameter, the parabolicity of the system could be  exploited to prove such results. 
Additionally, the forward-forward problem is interesting on its own right  as a learning game. In a standard MFG, agents follow  optimal trajectories of a terminal-value optimal control problem. For the forward-forward problem, only initial data is given. Thus, only past optimal trajectories are relevant to the density evolution.  Accordingly, the density evolution feeds on past information of the density.

This paper complements the results in  \cite{GomNurSed16} by examining several cases that can be studied explicitly. In Section \ref{sec:2}, 
we consider linear Hamiltonians and show that the wave equation is a special case of the forward-forward model. In Section \ref{sec:3}, we study quadratic forward-forward MFGs using elementary conservation law techniques. In particular, we compute Riemann invariants and characterize invariant domains. We end the paper by recalling the main result from   \cite{GomNurSed16}  on the convergence of 
forward-forward MFGs.

The study of forward-forward MFG presents substantial challenges even in dimension one which we consider here. The first-order  forward-forward problem can be rewritten as a nonlinear wave equation that inherits the non-linearity of the Hamiltonian. For quadratic Hamiltonian, the system reduces  to elastodynamics equation. In general, the forward-forward problem can be rewritten, formally, as a system of one-dimensional conservation law. This reformulation allows us to use methods and ideas from the theory of conservation laws such as  hyperbolicity, genuinely nonlinearity, Riemann invariants and invariant domains.

For the parabolic forward-forward problem, standard techniques yield existence and uniqueness of a solution. Here, we investigate the long-time convergence of this solution to the solution of a stationary MFG.

\section{ First-order, one-dimensional, forward-forward Mean-Field Games as nonlinear wave equations}
\label{sec:2}

In this section, we consider the first-order, one-dimensional, forward-forward MFG:
\begin{eqnarray}\label{eq: foff mfg}
\begin{cases}
u_t+H(u_x)=g(m),\\
m_t-(mH'(u_x))_x=0.
\end{cases}
\end{eqnarray}
  To gain insight into the system above, we consider two simple examples: linear and quadratic Hamiltonians.
  First, we
  assume that $H$ is linear, that is, $H(p)=p$, and that the coupling, $g$, is smooth invertible with $g'\neq 0$. In this case, $u$ satisfies the wave equation: 
  \begin{equation}\label{eq : wave equations}
 u_{tt}-u_{xx} =0.
 \end{equation}
  Thus, for smooth initial data in  $\eqref{eq: ff initial data}$, solutions of (\ref{eq: foff mfg}) are: 
  \begin{equation}\label{eq :ff linear sol u }
 u(x,t) =u_0(x-t)+\dfrac{1}{2}\int_{x-t}^{x+t} g(m_0(s)) ds,
  \end{equation}
 and 
   \begin{equation}\label{eq :ff linear sol m }
 m(x,t) =m_0(x-t). 
 \end{equation}
 
Next, we  
assume that $H$ is quadratic, $H(p)=p^2/2$, and that $g$ is logarithmic, $g(m)=\ln(m)$. Then, after elementary computations, we obtain that $u$ satisfies the nonlinear wave  equation 
 \begin{equation}\label{ nonlinear elasto}
 u_{tt}-(1+u_x^2)u_{xx} =0.
 \end{equation}
  This nonlinear equation is known in elastodynamics  and, in Lagrangian coordinates, it is a system of hyperbolic conservation laws.
 \begin{equation} \label{eq : elastodynamics}
 \begin{cases}
 v_t -w_x= 0 \\
 w_t - \sigma(v)_x=0
 \end{cases}
 \end{equation}
  with
 $$ w=u_t,\quad v=u_x,\quad \hbox{and}\quad\sigma(z)=z+\frac{z^3}{3}.$$ 
The system \eqref{eq : elastodynamics}  falls within a class of conservation laws   investigated in \cite{DiPerna83}, in the whole space, and in \cite{DeStTz00}, in the periodic case.

\section{ One-dimensional forward-forward Mean-field Games as conservation laws}
\label{sec:3}
In this section, we discuss  how certain one-dimensional forward-forward MFGs can be written as a system of one-dimensional conservation laws. Furthermore, we analyze latter and compute the corresponding Riemann invariants. 
%The fundamental idea is to explore  the relationship between one dimensional Hamilton-Jacobi equations and conservation laws. 
For simplicity, we consider  the forward-forward problem with quadratic Hamiltonian and a quadratic coupling:
\begin{eqnarray}\label{eq: foff mfg quadratic}
\begin{cases}
u_t+u_x^2/2=m^2/2,\\
m_t-(mu_x)_x=0.
\end{cases}
\end{eqnarray}
We complement \eqref{eq: foff mfg quadratic} with initial-initial condition
 \begin{eqnarray}\label{eq: ff initial data1}
 \begin{cases}
 u(x,0) = u_0(x) \\
 m(x,0) = m_0 (x) >0.
 \end{cases}
  \end{eqnarray}
   We note that the Fokker-Planck equation preserves positivity. As such, the density $m(t,\cdot)$ is positive for all $t>0$. We formally differentiate the first equation  with respect to $x$  and then set $v=u_x$. As a result, we  obtain

\begin{eqnarray}\label{eq: con laws quadratic}
\begin{cases}
v_t+\left( v^2/2-m^2/2\right)_x =0,\\
m_t-(mv)_x=0.
\end{cases}
\end{eqnarray}
The  associated flux function to the system of conservation laws \eqref{eq: con laws quadratic} is given by 
\begin{equation}\label{eq: nonlinear function}
F(v,m)= (v^2/2-m^2/2, -vm)\qquad m>0, \quad v\in \mathbb{R}.
\end{equation}
Finally, 
we compute its Jacobian and get
\begin{equation}\label{eq: Jacobian nonlinear function}
DF(v,m)= \begin{bmatrix}
v & -m\\
-m & -v
\end{bmatrix}.
\end{equation}

\subsection{Hyperbolicity and Genuine Nonlinearity}
A simple computation shows that (\ref{eq: Jacobian nonlinear function}) has two distinct eigenvalues, $\lambda_1$ and  $\lambda_2$,  given by

\begin{equation}\label{eq: Eigenvalues}
\lambda_1= -\sqrt{v^2  + m^2}\quad \hbox{and}\quad \lambda_2= \sqrt{v^2  + m^2},
\end{equation}
with  respective eigenvectors given by
\begin{equation}\label{eq: Eigenvectors}
r_1=\begin{bmatrix}
-v+ \sqrt{v^2  + m^2}  \\
m
\end{bmatrix}   \quad \hbox{and}\quad r_2= \begin{bmatrix}
v+ \sqrt{v^2  + m^2} \\
-m
\end{bmatrix}.
\end{equation}
From the discussion above, the system of conservation laws in (\ref{eq: con laws quadratic}) is strictly hyperbolic. Note that 
\begin{equation}\label{eq:genuine non lin 1}
\nabla \lambda _1\cdot r_1 = \dfrac{-m^2+ v \left(v-\sqrt{v^2  + m^2} \right) }{m\sqrt{v^2  + m^2}}
\end{equation}
and 
\begin{equation}\label{eq:genuine non lin 2}
\nabla \lambda _2\cdot r_2 = \dfrac{m^2-v \left(v+\sqrt{v^2  + m^2} \right) }{m\sqrt{v^2  + m^2}}.
\end{equation}
Observe that 
\begin{equation}\label{eq:genuine non lin 2}
\nabla \lambda _i\cdot r_i= 0 \Longleftrightarrow m^2-v \left(v+\sqrt{v^2  + m^2} \right) =0\qquad i=1,2.
\end{equation}
As a result,	(\ref{eq: con laws quadratic}) is a strictly hyperbolic genuinely nonlinear  system  outside the set $\mathcal{S}$ given by 
\begin{equation}\label{eq: singular set}
\mathcal{S}:= \{ (v,m): m^2=3v^2,\; m>0\}.
\end{equation}

\subsection{Riemann invariants and invariant domains}

In the following proposition, we provide an explicit expression for Riemann invariants for the system of conservation laws in the quadratic case. As a consequence, we obtain invariant sets for the corresponding problem with viscosity.
\begin{proposition}
	The system of conservation laws (\ref{eq: foff mfg quadratic})  
	has the following Riemann invariants
	$$w_1(v, m) = \sqrt{(m^2 + v^2)^3 }- v^3 +3 v m^2$$
	and
	$$ w_2(v, m) = \sqrt{(m^2 + v^2)^3}+ v^3 - 3 v m^2,$$
	corresponding to the eigenvectors $r_1$ and $r_2$.	
\end{proposition}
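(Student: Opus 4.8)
Because the Jacobian \eqref{eq: Jacobian nonlinear function} is symmetric, the eigenvectors $r_1,r_2$ of \eqref{eq: Eigenvectors} are orthogonal, so to say that $w_i$ is a Riemann invariant corresponding to $r_i$ is to say that $\nabla w_i$ is collinear with $r_i$, equivalently that $\nabla w_1\cdot r_2=0$ and $\nabla w_2\cdot r_1=0$ (the level curves of $w_1$, resp.\ $w_2$, being then integral curves of the field $r_2$, resp.\ $r_1$). The plan is to verify these two identities by a direct computation, and then to sketch how the formulas for $w_1,w_2$ are found in the first place.

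Set $\rho=\sqrt{v^2+m^2}$, so that $\partial_v\rho=v/\rho$ and $\partial_m\rho=m/\rho$. Differentiating, $\nabla w_1=3(v\rho-v^2+m^2,\,m\rho+2vm)$, and since $w_2$ is obtained from $w_1$ by replacing $v^3-3vm^2$ with its negative, $\nabla w_2=3(v\rho+v^2-m^2,\,m\rho-2vm)$. Now form $\nabla w_1\cdot r_2$ with $r_2=(v+\rho,-m)$ and $\nabla w_2\cdot r_1$ with $r_1=(\rho-v,m)$, and use the single identity $v\rho^2=v(v^2+m^2)=v^3+vm^2$ to eliminate the cube of the square root; the cubic monomials then cancel and the remaining terms vanish pairwise, so both inner products are identically $0$ on $\{m>0\}$. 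This proves the proposition.

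To explain where $w_1,w_2$ come from, I would switch to polar coordinates $v=\rho\cos\theta$, $m=\rho\sin\theta$ with $\theta\in(0,\pi)$. The triple-angle identity gives $v^3-3vm^2=\rho^3\cos3\theta$, hence $w_1=\rho^3(1-\cos3\theta)$ and $w_2=\rho^3(1+\cos3\theta)$; a short computation shows that $r_1$ and $r_2$ point in the directions $\sin\tfrac{3\theta}{2}\,\hat\rho+\cos\tfrac{3\theta}{2}\,\hat\theta$ and $\cos\tfrac{3\theta}{2}\,\hat\rho-\sin\tfrac{3\theta}{2}\,\hat\theta$. Consequently the integral curves of $r_2$ and of $r_1$ solve the separable ODEs $\rho^{-1}\,d\rho=-\cot\tfrac{3\theta}{2}\,d\theta$ and $\rho^{-1}\,d\rho=\tan\tfrac{3\theta}{2}\,d\theta$, whose first integrals are $\rho^3\sin^2\tfrac{3\theta}{2}=w_1/2$ and $\rho^3\cos^2\tfrac{3\theta}{2}=w_2/2$. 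This also makes it visible that $w_1\geq0$ and $w_2\geq0$, and that each $w_i$ vanishes precisely on the relevant branch of the degeneracy set $\mathcal S$ in \eqref{eq: singular set}.

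There is no genuine conceptual obstacle here — the statement is a verification — so the only thing requiring care is organizing the algebra so that the $\sqrt{v^2+m^2}$ terms are handled cleanly, and fixing the normalization of the eigenvectors in \eqref{eq: Eigenvectors} so that the pairing $w_i\leftrightarrow r_i$ in the statement comes out correctly. The polar substitution is what reduces both the discovery and the check to an elementary trigonometric identity, so in practice that is the route I would take.
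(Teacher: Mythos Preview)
Your proof is correct and follows the same route as the paper: both reduce the claim to showing that $\nabla w_i$ is parallel to $r_i$ (equivalently $\nabla w_i\cdot r_j=0$ for $j\neq i$, by orthogonality of the eigenvectors). The paper merely records the two first-order PDEs \eqref{eq: 1-rieman invariant 0}--\eqref{eq: 2-rieman invariant 0} and stops, leaving the actual substitution to the reader; you carry the computation through explicitly and, in addition, supply the polar-coordinate derivation that explains where the formulas $w_1=\rho^3(1-\cos3\theta)$, $w_2=\rho^3(1+\cos3\theta)$ come from. So your argument is strictly more complete than the paper's own proof.
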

\begin{proof}
Note that 
 $w_i$ is  such that $\nabla w_i$ is parallel to the eigenvector $r_i$. This means that $w_1$ solves
\begin{equation}\label{eq: 1-rieman invariant 0}
(v+\sqrt{m^2+v^2})\partial_v w_1- m\partial_m w_1= 0.
\end{equation}
In a similar way,  for  $w_2$,
\begin{equation}\label{eq: 2-rieman invariant 0}
(-v+\sqrt{m^2+v^2})\partial_v w_1+ m\partial_m w_1= 0.
\end{equation}
\cqd
\end{proof}

Using Riemann invariants, we can identify invariant domains for the viscosity solutions to \eqref{eq: con laws quadratic}. These are obtained by looking at level curves of $w_1$ and $w_2$ see Fig. 1.

\begin{figure}
	\centering
	\subfigure[Level sets of $w_1$.]{
		\includegraphics[width=5cm,height=5cm]{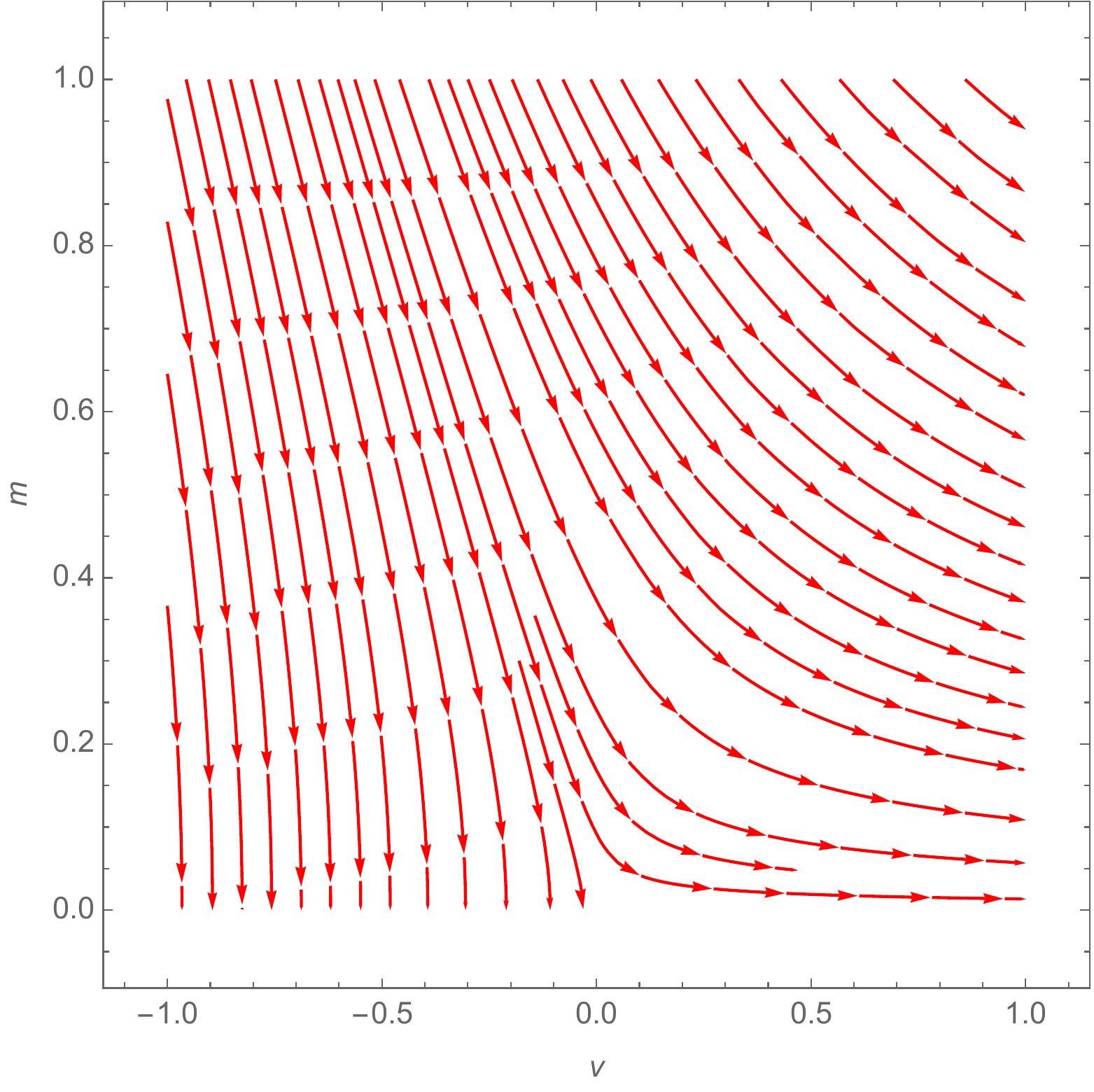}
	} \quad
	\subfigure[Level sets of $w_2$.]{
	\includegraphics[width=5cm,height=5cm]{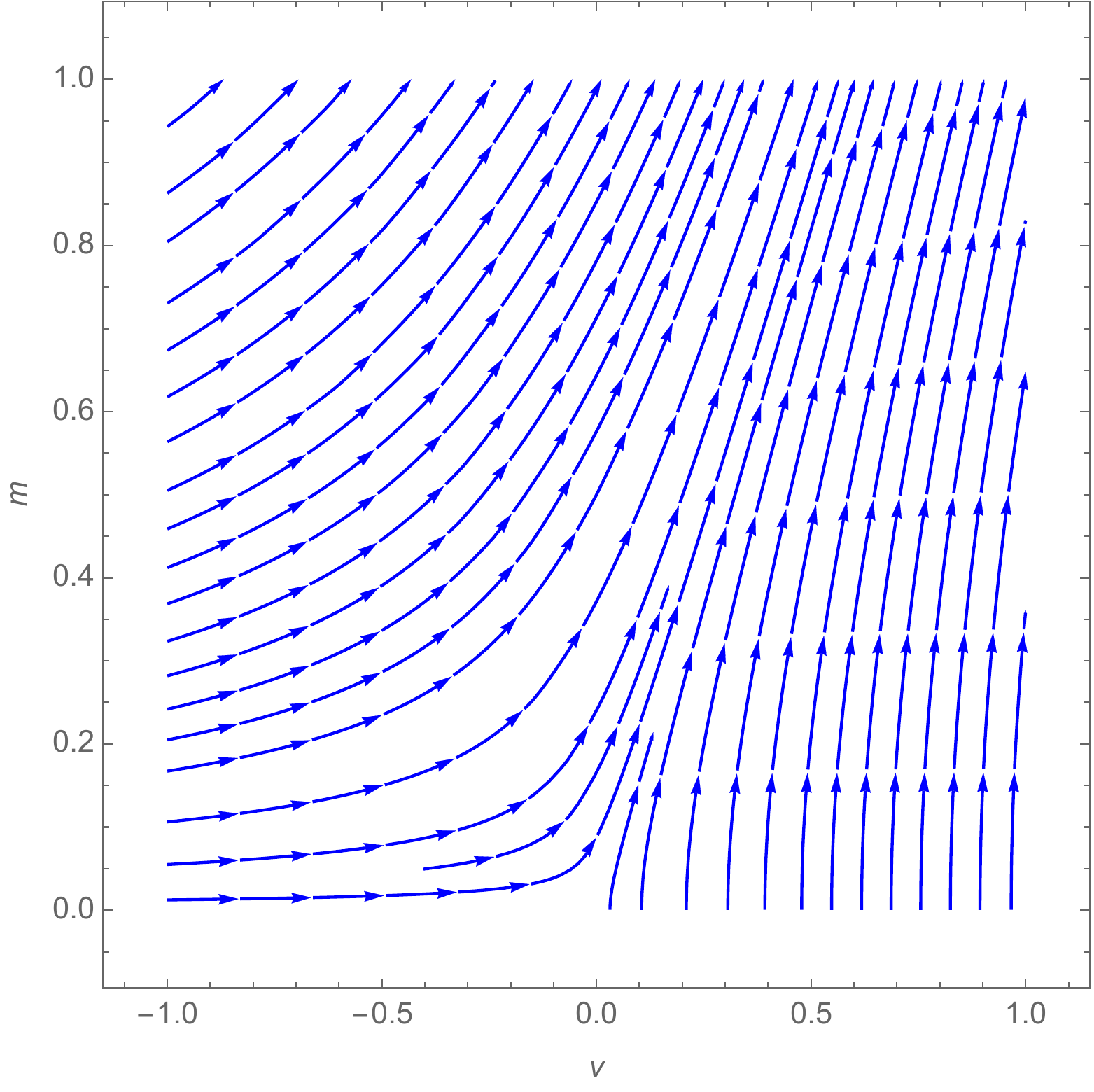}
    }
	\caption{Invariant domains associated with the hyperbolic conservation laws \ref{eq: con laws quadratic}.}
\end{figure}

\section{Convergence of one-dimensional, forward-forward, parabolic conservation laws}
Here, we consider \eqref{eq: General MFG} in dimension 1. As before, by differentiating the first equation  
 with respect to $x$ and setting  $v=u_x$, we get
\begin{equation}\label{eq: parabolic ffmfg}
\begin{cases}
v_t+(v^2/2-m^2/2)_x=\epsi v_{xx},\\
m_t-(mv)_x= \epsi m_{xx}.
\end{cases}
\end{equation}
The system  \eqref{eq: parabolic ffmfg} has a  unique local smooth solution for bounded initial data. Now, we investigate the long-time convergence of the solution. For that, we, additionally, require
\begin{equation}\label{eq: fixedmeanvalues}
\int\limits_{\Tt} v(x,0)dx=0, \quad \int\limits_{\Tt} m(x,0)dx=1,
\end{equation}
which are natural assumptions from the perspective of periodic MFGs. The following theorem is proven in \cite{GomNurSed16}.
\begin{theorem}
	If $v,m \in C^2(\Tt \times (0,+\infty))\cap C(\Tt \times
	[0,+\infty)),\ m>0,$ solve \eqref{eq: parabolic ffmfg} and satisfy \eqref{eq: fixedmeanvalues} then, we have that
	\begin{equation}\label{eq: longtime l1}
	\lim \limits_{t\to \infty} \int\limits_{\Tt} |v(x,t)|dx=0,\quad \lim \limits_{t\to \infty} \int\limits_{\Tt} |m(x,t)-1|dx=0.
	\end{equation}
\end{theorem}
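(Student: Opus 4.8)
The plan is to produce a Lyapunov functional for \eqref{eq: parabolic ffmfg} whose dissipation, combined with Poincar\'e's inequality, forces (exponential) decay. Two facts come for free from the divergence structure. Integrating the first equation over $\Tt$ kills both the flux term $(v^2/2-m^2/2)_x$ and the viscous term $\epsi v_{xx}$, so $\int_\Tt v(x,t)\,dx$ is constant in $t$, hence $\equiv 0$ by \eqref{eq: fixedmeanvalues}; and the second equation is itself in conservation form, so $\int_\Tt m(x,t)\,dx\equiv 1$. Thus $v(\cdot,t)$ and $m(\cdot,t)-1$ have zero mean for every $t\ge 0$, which is precisely what makes Poincar\'e applicable later; note also that the target pair $(v,m)=(0,1)$ is the unique spatially constant steady state compatible with \eqref{eq: fixedmeanvalues}.

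The heart of the argument is the energy identity for
\[
E(t):=\frac12\int_\Tt\!\bigl(v(x,t)^2+(m(x,t)-1)^2\bigr)\,dx .
\]
Differentiating in $t$, substituting the two equations, and integrating by parts over $\Tt$: the cubic term $\int_\Tt v^2v_x\,dx$ is an exact derivative and vanishes; the two cross terms $\int_\Tt v\,m\,m_x\,dx$, one produced by $(m^2/2)_x$ in the $v$-equation and one by $(mv)_x$ in the $m$-equation, cancel one another exactly; and the viscous contributions combine into $-\epsi\int_\Tt(v_x^2+m_x^2)\,dx$. Hence
\[
\frac{d}{dt}E(t)=-\epsi\int_\Tt\!\bigl(v_x(x,t)^2+m_x(x,t)^2\bigr)\,dx\le 0 .
\]
This is the step that genuinely uses the quadratic structure --- a quadratic Hamiltonian \emph{and} a quadratic coupling $g(m)=m^2/2$, so that the natural convex ``entropy'' in $m$ is again $m^2/2$ --- and it is the step where an algebraic slip is easiest to make, so I would verify the integrations by parts carefully. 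Regularity is not an issue: for $t>0$ the solution is $C^2$ on the compact set $\Tt$, so $E$ is $C^1$ on $(0,+\infty)$ and all the manipulations are legitimate, while continuity up to $t=0$ lets one integrate the identity on $[\delta,T]$ and let $\delta\to 0$.

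With this identity the rest is routine. Since $v(\cdot,t)$ and $m(\cdot,t)-1$ are mean-zero, the Poincar\'e inequality on $\Tt$ gives $\int_\Tt v_x^2\ge c_P\int_\Tt v^2$ and $\int_\Tt m_x^2\ge c_P\int_\Tt(m-1)^2$ for a fixed $c_P>0$, whence $\tfrac{d}{dt}E(t)\le -2c_P\epsi\,E(t)$; Gronwall then yields $E(t)\le E(0)\,e^{-2c_P\epsi t}$, with $E(0)<\infty$ because the initial data are continuous on $\Tt$. Finally, by Cauchy--Schwarz on $\Tt$ (of unit measure),
\[
\int_\Tt|v(x,t)|\,dx\le\Bigl(\int_\Tt v(x,t)^2\,dx\Bigr)^{1/2}\le\sqrt{2E(t)},\qquad
\int_\Tt|m(x,t)-1|\,dx\le\sqrt{2E(t)},
\]
and sending $t\to\infty$ gives \eqref{eq: longtime l1} (indeed with an exponential rate, which is stronger than claimed). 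If only the qualitative statement is wanted, one can skip Gronwall: integrating the identity gives $\epsi\int_0^\infty\!\int_\Tt(v_x^2+m_x^2)\,dx\,dt\le E(0)<\infty$, and since $E$ is monotone and bounded it has a limit, which an integrable nonnegative function must attain as $0$.

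The only real obstacle is getting the energy identity exactly right; once the monotone functional $E$ is in hand, everything else is soft. A secondary point to keep in mind is that $\epsi>0$ is genuinely used --- for $\epsi=0$ there is no dissipation and, as the linear wave-equation example of Section~\ref{sec:2} shows, no decay.
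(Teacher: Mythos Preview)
Your argument is correct. The energy identity
\[
\frac{d}{dt}\,\frac12\int_{\Tt}\bigl(v^2+(m-1)^2\bigr)\,dx=-\epsi\int_{\Tt}\bigl(v_x^2+m_x^2\bigr)\,dx
\]
does hold exactly as you claim: multiplying the first equation by $v$ gives $+\int_{\Tt} v m m_x\,dx$ from the $(m^2/2)_x$ term, while multiplying the second by $m-1$ and integrating by parts the $(mv)_x$ term gives $-\int_{\Tt} m_x\, mv\,dx$; the cubic $\int_{\Tt} v^2v_x\,dx$ and the viscous terms behave as you say. With the conserved means this feeds directly into Poincar\'e and Gronwall, and the $L^1$ conclusion follows by Cauchy--Schwarz.

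As for comparison: the present paper does not actually prove the theorem --- it only states it and refers to \cite{GomNurSed16} for the proof --- so there is nothing in this text to compare your approach against. Your self-contained $L^2$ Lyapunov argument is certainly in the spirit of what one expects for parabolic conservation laws with an entropy; whether the cited reference proceeds via this quadratic energy or instead via the Riemann-invariant/entropy machinery developed in Section~\ref{sec:3} (which would give $L^\infty$ control through invariant regions before any decay statement) cannot be read off from this paper.

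One small caveat on your closing remark: the ``qualitative'' shortcut you sketch is incomplete as written. From $\int_0^\infty(-E')\,dt<\infty$ and monotonicity of $E$ you only get that $E(t)$ converges to some $E_\infty\ge 0$; to conclude $E_\infty=0$ you still need Poincar\'e (e.g.\ $-E'(t)\ge 2c_P\epsi E(t)$ forces $E_\infty=0$, since otherwise $-E'$ would stay bounded away from zero). This does not affect your main Gronwall argument, which is complete.
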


\bibliographystyle{plain}

\begin{thebibliography}{10}
	
	\bibitem{DY}
	Y.~Achdou and I.~Capuzzo-Dolcetta.
	\newblock Mean field games: numerical methods.
	\newblock {\em SIAM J. Numer. Anal.}, 48(3):1136--1162, 2010.
	
	\bibitem{DeStTz00}
	Sophia Demoulini, David M.~A. Stuart, and Athanasios~E. Tzavaras.
	\newblock Construction of entropy solutions for one-dimensional elastodynamics
	via time discretisation.
	\newblock {\em Ann. Inst. H. Poincar\'e Anal. Non Lin\'eaire}, 17(6):711--731,
	2000.
	
	\bibitem{DiPerna83}
	R.~J. DiPerna.
	\newblock Convergence of approximate solutions to conservation laws.
	\newblock {\em Arch. Rational Mech. Anal.}, 82(1):27--70, 1983.
	
	\bibitem{GMit}
	D.~Gomes and H.~Mitake.
	\newblock Existence for stationary mean-field games with congestion and
	quadratic {H}amiltonians.
	\newblock {\em NoDEA Nonlinear Differential Equations Appl.}, 22(6):1897--1910,
	2015.
	
	\bibitem{GP}
	D.~Gomes, L.~Nurbekyan, and M.~Prazeres.
	\newblock Explicit solutions of one-dimensional first-order stationary
	mean-field games with a generic nonlinearity.
	\newblock {\em Preprint}, 2016.
	
	\bibitem{GPat}
	D.~Gomes and S.~Patrizi.
	\newblock Obstacle mean-field game problem.
	\newblock {\em Interfaces Free Bound.}, 17(1):55--68, 2015.
	
	\bibitem{GPatVrt}
	D.~Gomes, S.~Patrizi, and V.~Voskanyan.
	\newblock On the existence of classical solutions for stationary extended mean
	field games.
	\newblock {\em Nonlinear Anal.}, 99:49--79, 2014.
	
	\bibitem{GPim2}
	D.~Gomes and E.~Pimentel.
	\newblock Time dependent mean-field games with logarithmic nonlinearities.
	\newblock {\em To appear in SIAM Journal on Mathematical Analysis}.
	
	\bibitem{GPim1}
	D.~Gomes and E.~Pimentel.
	\newblock Local regularity for mean-field games in the whole space.
	\newblock {\em To appear in Minimax Theory and its Applications}, 2015.
	
	\bibitem{GomNurSed16}
	Diogo~A. Gomes, Levon Nurbekyan, and Marc Sedjro.
	\newblock One-{D}imensional {F}orward--{F}orward {M}ean-{F}ield {G}ames.
	\newblock {\em Appl. Math. Optim.}, 74(3):619--642, 2016.
	
	\bibitem{Caines1}
	M.~Huang, R.~P. Malham{\'e}, and P.~E. Caines.
	\newblock Large population stochastic dynamic games: closed-loop
	{M}c{K}ean-{V}lasov systems and the {N}ash certainty equivalence principle.
	\newblock {\em Commun. Inf. Syst.}, 6(3):221--251, 2006.
	
	\bibitem{ll2}
	J.-M. Lasry and P.-L. Lions.
	\newblock Jeux \`a champ moyen. {II}. {H}orizon fini et contr\^ole optimal.
	\newblock {\em C. R. Math. Acad. Sci. Paris}, 343(10):679--684, 2006.
	
	\bibitem{ll3}
	J.-M. Lasry and P.-L. Lions.
	\newblock Mean field games.
	\newblock {\em Jpn. J. Math.}, 2(1):229--260, 2007.
	
	\bibitem{porretta2}
	A.~Porretta.
	\newblock Weak solutions to {F}okker-{P}lanck equations and mean field games.
	\newblock {\em Arch. Ration. Mech. Anal.}, 216(1):1--62, 2015.
	
\end{thebibliography}

\def\polhk#1{\setbox0=\hbox{#1}{\ooalign{\hidewidth
			\lower1.5ex\hbox{`}\hidewidth\crcr\unhbox0}}} \def\cprime{$'$}

\end{document}